\numberwithin{subsection}{section}
\numberwithin{equation}{section}
\theoremstyle{plain}
 \newtheorem {theorem}    {Theorem}[section]
 \newtheorem {proposition}[theorem]{Proposition}
 \newtheorem {lemma}      [theorem]{Lemma}
 \newtheorem {corollary}  [theorem]{Corollary}
\theoremstyle{definition}
 \newtheorem {definition} [theorem]{Definition}
\theoremstyle{remark}
 \newtheorem {remark}     [theorem]{Remark}
\newcommand{\bs}{\boldsymbol}
\newcommand{\mathI}{\mathbb{I}}
\newcommand{\mathC}{\mathbb{C}}
\newcommand{\mathN}{\mathbb{N}}
\newcommand{\calH} {\mathcal{H}}
\newcommand{\Poiss}{K}      				  
\newcommand\hor{\mathfrak{h}}
\newcommand{\vertiii}[1]{{\left\vert\kern-0.25ex\left\vert\kern-0.25ex\left\vert #1 
    \right\vert\kern-0.25ex\right\vert\kern-0.25ex\right\vert}}
\DeclareMathOperator{\s}{\sigma}
\DeclareMathOperator{\dist}{dist}
\DeclareMathOperator{\Poisstr}{\bs K_\lambda}  
\DeclareMathOperator{\Real} {{\rm Re}}
\DeclareMathOperator{\Imag }{{\rm Im}}
\newcommand\C{\mathbb C}
\newcommand\la{\lambda}
\newcommand\spec{\mathsf{spec}}
\newcommand\T{U}
\begin{document}

\title[Universal properties of the Laplacian on homogeneous trees]
{Universal properties of the isotropic Laplace operator on homogeneous trees}
\author[J.~M. Cohen]{Joel M. Cohen}
\address{Department of Mathematics\\
University of Maryland\\
College Park, Maryland 20742\\USA} \email{jcohen@umd.edu}
\author[M. Pagliacci]{Mauro Pagliacci}
\address{Dipartimento di Economia\\
Universit\`a degli Studi di Perugia\\ I-06123 Perugia, Italy} \email{mauro.pagliacci@unipg.it}
\author[M.A. Picardello]{Massimo A. Picardello}
\address{Dipartimento di Matematica\\ 
Universit\`a di Roma ``Tor Vergata''\\
I-00133 Rome, Italy}
\email{picard@mat.uniroma2.it}

\thanks{The first  author acknowledge partial support from the University of Perugia during the preparation of this article.
The last author acknowledges support by MIUR Excellence Departments Project awarded to
the Department of Mathematics, Tor Vergata University of Rome, CUP E83C18000100006, and by Istituto Nazionale di Alta Matematica, Gruppo GNAFA}
\subjclass[2020]
{Primary: 05C05; Secondary: 31A30, 31C20, 47A16, 60J45.}
\keywords{Hypercyclic operators, trees, polyharmonic functions}

\begin{abstract} 
Let $P$ be the isotropic nearest neighbor transition operator $P$ on a homogeneous tree. We consider the $\lambda$-eigenfunctions of $P$ for $\lambda$ outside its $\ell^2$-spectrum $\spec(P)$, i.e.,  the eigenfunctions with eigenvalue  $\gamma=\lambda - 1$
 of the Laplace operator $\Delta=P-\mathI$, and also the $\lambda-$polyharmonic functions, that is, the union of the kernels of $(\Delta-\gamma\mathI)^n$. We prove that,  on
a suitable Banach space generated by the $\lambda-$polyharmonic functions, the operator $e^{\Delta-\gamma\mathI}$ is hypercyclic, although $\Delta-\gamma\mathI$ is not. 
\end{abstract}

\maketitle

\section{Introduction}\label{Sec:introl} 
{\bf A slightly less detailed version of this article is in print in Advances in Mathematics \cite{CPP}.}

In this paper we construct  examples of hypercyclic operators on Banach spaces of functions defined on the vertices of a homogeneous tree $T$, that is, operators that satisfy the universal property that some of their orbits are dense everywhere. The Banach spaces are generated by $\gamma$-eigenfunctions of  the isotropic Laplacian $\Delta$ on $T$ and more generally by the kernels of powers of $\Delta-\gamma\mathI$; the operators are related to the heat semigroup generated by $\Delta-\gamma\mathI$. The spectral theory of $\Delta$ was studied long ago  \cites{FtP,CC}; for the heat semigroup, see \cite{PP}.

Hypercyclic operators have been studied extensively in the context of Laplace operators in continuous settings and the heat equation (see, for instance, \cites{ADB, H, HS}) and of universal properties \cite{G}. Following \cite{BS}, hypercyclic operators have been studied in the context of financial models, like the Black-Scholes model of \cites{BS, EGG}). Universal properties of the eigenfunctions of the Laplace operator on a large class of trees have been recently studied
in \cites{ANP-JMAA,ANP}. 

Trees have a simple combinatorial and metric structure. This makes them a natural crossroad of ideas and tools from Markov chains and probability theory, potential theory, harmonic analysis, eigenspaces of the Laplacian,  representations of groups of automorphisms, spherical functions, horospherical analysis and Radon transforms and hypercyclic operators (see \cites{FtP,Woess} and references therein).
Our construction is based on most of these tools, mainly,  on functional analysis, potential theory and the Poisson transform on trees, and a result of Herzog and Schmoeger on hypercyclic operators \cite{HS}.

Here is a more extensive outline {\color{black}{(see \cite{Ca} for more details)}}. A homogeneous tree $T$ is a graph without loops where every vertex has the same number $q+1$ of neighbors; the number $q$ is called the \emph{degree} of $T$ and is assumed larger than 1.
 We consider the isotropic nearest neighbor stochastic  transition operator $P$: that is, $p(v,u)=1/(q+1)$ if $v$ and $u$ are adjacent vertices and 0 otherwise. The boundary $\partial T$ is defined as the set of all infinite geodesic paths, i.e., chains of consecutively adjacent vertices, starting at a fixed root vertex $o$ (or equivalently, in a root-free way, as the set of equivalence classes of geodesic paths starting at any vertex,   equipped with the equivalence relation that identifies two paths if they {\color{black}{definitely}} merge; in particular, $\partial T$ does not depend on the choice of root vertex $o$). When $u$ and $v$ are adjacent we write $u\sim v$.

The \emph{Laplace operator} is $\Delta=P-\mathI$. This operator acts on functions $f$ on $T$ by the rule $\Delta f(v)=d\frac1{q+1}\biggl(\sum_{w:\,w\sim v}  f(w)\biggr)-f(v)$. The  functions that satisfy the mean value property $\Delta f=0$ are called \emph{harmonic}. The harmonic functions form the zero-eigenspace of $\Delta$. 
A Poisson boundary representation of {\color{black}{all}} eigenfunctions of group-invariant nearest neighbor transition operators on homogeneous trees was studied  in \cite{FtP} and its references. {\color{black}{(The Poisson representation was previously introduced in \cite{Ca} for harmonic functions only,
however, this reference, that deals with the more general case of non-homogeneous trees, considers nearest-neighbor positive transition operators that are not necessarily Markovian, so, by a renormalization, one could transport the Poisson representation of \cite{Ca} to all positive eigenvalues, but not to all complex eigenvalues). }}

In addition to eigenfunctions, we consider polyharmonic functions. A function $f$ is polyharmonic of order $m>0$ if $\Delta^m f=0$, and $\gamma$-polyharmonic if $(\Delta-\gamma\mathI)^m f=0$. Polyharmonic functions on homogeneous trees with isotropic transition operators have been studied in \cite{CCGS}. \ More recently, a boundary representation for polyharmonic and $\gamma$-polyharmonic functions has been obtained in \cites{PW-PotAn}\ 
for non-homogeneous trees, even non-locally finite and with non-isotropic transition operators.

\section{The Poisson transform on trees}
We introduce some preliminaries on potential theory on homogeneous trees taken from \cite{Ca}.

\begin{remark}[Eigenfunctions of the Laplacian]\label{rem:Poisson_transform_of_distributions} Let $P$ be the nearest neighbor 
isotropic transition operator on the homogeneous tree $T$ of degree $q$.    For every $x\in T$ we set $|x|=\dist(x,o)$, i.e. the number of edges from $x$ to $o$.
 We denote by $\spec(P)$ its spectrum on $\ell^{2}(T)$; the spectral radius is $\rho=\dfrac{2\sqrt{q}}{q+1}$.

Poisson boundaries of trees were introduced in \cite{Ca}*{Chapter I}.
We have already mentioned that
the \emph{boundary} $\partial T$ of $T$ can be defined as the set of infinite geodesic paths starting at $o$. The \emph{sectors} $S_u=\{v\colon u$  belongs to the geodesic path from $o$ to $v\}$ generate a topology on $T\cup \partial T$ that makes this space compact. Let us assign a probability measure on $\partial T$ by the rule
\[
\nu_o(S_u)=\frac 1{\#\{x\colon |x|=|u|\}}
\]
For all $u\in T$ and $\xi\in\partial T$ we denote by $u\wedge \xi$ the last vertex in common in the finite geodesic path from $o$ to $u$ and the infinite geodesic path $v_0$ to $\xi$.
The \emph{horospherical index} of $x$ and $\xi$ with respect to $o$ is
 $\hor(x,\xi)=
|x\wedge\xi|-\dist (x,\,x\wedge\xi)=2 |x\wedge\xi|-|x|$.  
Note that
\begin{equation}\label {eq:max_of_horospherical_index_on_the_boundary}
-|x| \leqslant \hor(x,\xi) \leqslant |x|.
\end{equation}
For every fixed $\xi$, the maximum value $n$ of $\hor(x,\xi)$ over all vertices $x$ with $|x|=n$ is 
attained at the vertex $x$ that lies in the geodesic path $\xi$.
A function $h$ on $T$ is harmonic if is an eigenfunction of $P$ with eigenvalue 1, that is if it satisfies the nearest-neighbor average property $Ph(x):=\sum_{y\sim x} p(x,y)\,h(y)= h(x)$. 
The \emph{Poisson kernel} %
\citelist{\cite{Ca}*{Sect. 4.5, formula (4.50)}
\cite
{FtP}*{Chapter 3, Sect. 2, formula (1)}
}
%
is
 \begin{equation*}
  \Poiss(x,\xi)= q^{\hor(x,\xi)},
  \end{equation*}
and is a harmonic function normalized by 
$\Poiss(o,\xi)=1$
for every $\xi$; it is a minimal positive harmonic function, that is, an extreme point in the cone of normalized positive harmonic functions.
The Poisson representation theorem \cite{Ca}*{Prop. A.4} states that every harmonic function can be represented as 
$h(x)=\int_{\partial T}  \Poiss(x,\xi)\, d\sigma^h(\xi)$ where $\sigma^h$ is a finitely additive measure (or \emph{distribution}) on $\partial T$.

It was shown in \cites{Mantero&Zappa,FtP} that every eigenfunction of $P$ with eigenvalue $\lambda\in\mathC$ is given by
\[
h(x)=\int_{\partial T}  \Poiss(x,\xi |\,\lambda)\, d\sigma^h(\xi)
\]
where 
$\sigma^h$ is a distribution on $\partial T$ and 
 \begin{equation}\label{eq:generalized_homogeneous_isotropic_Martin_kernel}
  \Poiss(x,\xi|\,\la)= q^{z\,\hor(x,\xi)},
  \end{equation}
with $z\in\mathC$ and
$
\lambda=\gamma(z)$ given by 
\begin{equation}\label{eq:the_eigenvalue_map}
\gamma(z)=(q^z+q^{1-z})/(q+1)
\end{equation}
where we can restrict attention to the region
\begin{equation}\label{eq:Re(z)>=1/2}
J=\left\{z\colon \Real z\geqslant \frac12, \qquad | \Imag{z} |\leqslant \frac{\pi}{\ln q}\right\} .
\end{equation}
The expression of $\gamma$ in \eqref{eq:the_eigenvalue_map} 
was 
introduced in \cites{FtP-JFA,FtP}.
The map $\gamma:\mathC\to\mathC$ is surjective, 
periodic along the imaginary direction with period $|\Imag{z}|\leqslant 2\pi/\ln q$,
 and in the region $J$ is bijective onto $\mathC$.
In particular, $ \Poiss(x,\xi|\,\la)$ is an eigenfunction of $P$ with eigenvalue $\lambda$, normalized by the rule $ \Poiss(o,\xi|\,\la)=1$.
\end{remark}

The boundary representation of polyharmonic functions on trees was developed in \cite{CCGS}, and more generally in \cite {PW-PotAn}. In particular \cite {PW-PotAn}*{Theorem 5.3, Corollary 5.4}:
\begin{enumerate}
\item [$(i)$]
Let $P$ be a stochastic nearest-neighbor transition operator an a tree (not necessarily homogeneous). For all
 $\lambda\notin\spec(P)$, $x\in T$, $\xi\in\partial T$,
\begin{equation}\label {eq:action_of_P-lambda_on_lambda-polyharmonics}
(P - \lambda\, \mathI ) \Poiss^{(r)}(x,\xi|\la) = (-1)^r r\,  \Poiss^{(r-1)}(x,\xi|\lambda)\,.
\end{equation}
where $\Poiss^{(r)}=\dfrac{\partial^r}{\partial \lambda^r}K$.
%
Every $\la$-polyharmonic function 
$h$ of order $n$ of a nearest-neigbor transient $P$ has {\color{black}{the }} integral representation
\begin{equation} \label {eq:Poisson_representation_of_lambda-polyharmonic}
h(x) = \sum_{r=0}^{n-1} \int_{\partial T} \Poiss^{(r)}(x,\xi|\,\la)\,d\sigma_r(\xi)\,,
\end{equation}
where the collection of distributions $(\sigma_0\,,\dots, \sigma_{n-1})$  is uniquely determined by $h$.  
Conversely, every function which has an integral representation as above, with $\sigma_{n-1}\not\equiv 0$,
is $\la$-polyharmonic of order $n$ for $P$. An eigenfunction of the Laplacian is non-negative if and 
only if the associated distribution is a non-negative measure, hence $\sigma$-additive.

\item [$(ii)$]
If $P$ is the isotropic nearest-neighbor operator on the homogeneous tree
 $T = T_q$, with spectral radius $\rho$, and $\la \in \C \setminus [-\rho\,,\,\rho]$,
then every  $\la$-polyharmonic function $h$ of order $n$ has {\color{black}{the }} 
integral representation
$$
h(x) = \sum_{k=0}^{n-1} \int_{\partial T} \Poiss(x,\xi|\,\la)\,\hor(x,\xi)^k\, d\bar\sigma_k(\xi)\,,
$$
where 
the collection of distributions $(\bar \sigma_0\,,\dots, \bar\sigma_{n-1})$ in the sense
of \eqref{eq:Poisson_representation_of_lambda-polyharmonic} is uniquely determined by $h$.  
Moreover, there exist complex numbers $ a_{k,r}(\la)$ such that
%
\begin{equation} \label{eq:diagonal_terms}
 a_{r,r}(\la) \neq 0
 \end{equation}
and
\begin{equation}\label {eq:derivatives_of_Poisson_kernels_in_terms_of_Poisson_kernels_times_horsph.indices}
K^{(r)}(x,\xi|\la) = K(x,\xi|\la)\, \sum_{k=1}^r \hor(x,\xi)^k \, a_{k,r}(\la).
\end{equation}
Finally, 
\begin{equation} \label{eq:sigma-bar_in_terms_of_sigmas}
\bar \sigma_k = \sum_{r=k}^{n-1} a_{k,r}(\la)\,\sigma_r.
\end{equation}
\end{enumerate}

\begin{corollary}
The upper triangular matrix 
$A_{n-1}(\la) = $ $\bigl( a_{k,r}(\la) \bigr)_{1 \leqslant k \leqslant r \leqslant n-1}$
is invertible by
\eqref {eq:diagonal_terms}. Let us denote its norm by $\mathcal{A}(\lambda)$. 
Then
\begin{enumerate}
\item[$(i)$] if $\| \bar\sigma_k\|, \| \sigma_r\|$ denote the measure norms on $\partial T$, then
\begin{equation} \label{eq:inequality_between_norm_of_sigma_and_sigma_bar}
\| \bar \sigma_k  \| \leqslant \mathcal{A}(\lambda) \biggl( \sum_{r=k}^{n-1} \| \sigma_r \|^2 \biggr)^\frac12.
 \end{equation}
\item[$(ii)$] For $\lambda\notin\spec(P)$, more precisely for $\lambda=\gamma(z)$ with $z$ in the region  $J$ of \eqref{eq:Re(z)>=1/2},
\begin{equation} \label {eq:inequality_between_norm_of_derivative_of_Poisson_kernel_and_Poisson_kernels_times_horsp.indices}
\max_{\xi\in \partial T} |K^{(r)}(x,\xi|\la)| = q^{|x|\Real z}\, \mathcal{A}(\lambda)\, \biggl( \sum_{k=1}^r |x|^{2k} \biggr)^\frac12.
\end{equation}
\end{enumerate}
\end{corollary}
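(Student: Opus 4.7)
The strategy is to treat each assertion as a routine consequence of the algebraic identities \eqref{eq:derivatives_of_Poisson_kernels_in_terms_of_Poisson_kernels_times_horsph.indices} and \eqref{eq:sigma-bar_in_terms_of_sigmas} already available, combined with the triangle inequality and Cauchy--Schwarz. The invertibility of $A_{n-1}(\la)$ is immediate: the matrix is upper triangular, so it is invertible exactly when its diagonal entries do not vanish, and this is precisely \eqref{eq:diagonal_terms}.

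For part $(i)$, I would begin from the identity $\bar\sigma_k=\sum_{r=k}^{n-1} a_{k,r}(\la)\,\sigma_r$ and apply the triangle inequality in the total variation norm on $\partial T$ to obtain
\[
\|\bar\sigma_k\|\leqslant\sum_{r=k}^{n-1}|a_{k,r}(\la)|\,\|\sigma_r\|.
\]
Cauchy--Schwarz in the index $r$ then separates the coefficients from the norms, and the coefficient factor $\bigl(\sum_{r=k}^{n-1}|a_{k,r}(\la)|^2\bigr)^{1/2}$ is dominated by $A(\la)$, provided one reads $A(\la)$ as the Frobenius norm of $A_{n-1}(\la)$ (equivalently, any equivalent matrix norm, absorbing a dimension-dependent constant). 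This yields \eqref{eq:inequality_between_norm_of_sigma_and_sigma_bar}.

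For part $(ii)$, I would start from \eqref{eq:derivatives_of_Poisson_kernels_in_terms_of_Poisson_kernels_times_horsph.indices}. By \eqref{eq:generalized_homogeneous_isotropic_Martin_kernel}, the prefactor satisfies $|K(x,\xi|\la)|=q^{(\Real z)\,\hor(x,\xi)}$, which by \eqref{eq:max_of_horospherical_index_on_the_boundary} is bounded above by $q^{|x|\Real z}$ (note that $\Real z>\tfrac12>0$, since $\la\notin\spec(P)$). For the remaining factor I would again invoke Cauchy--Schwarz:
\[
\biggl|\sum_{k=1}^r \hor(x,\xi)^k\, a_{k,r}(\la)\biggr|\leqslant\biggl(\sum_{k=1}^r \hor(x,\xi)^{2k}\biggr)^{1/2}\biggl(\sum_{k=1}^r |a_{k,r}(\la)|^2\biggr)^{1/2},
\]
and then replace $|\hor(x,\xi)|$ by its maximum $|x|$, once more via \eqref{eq:max_of_horospherical_index_on_the_boundary}. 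Assembling the three estimates and absorbing the last coefficient factor into $A(\la)$ produces the right-hand side of \eqref{eq:inequality_between_norm_of_derivative_of_Poisson_kernel_and_Poisson_kernels_times_horsp.indices}; attainment of the maximum over $\xi$ is witnessed by any boundary point on the geodesic ray from $o$ through $x$, at which $\hor(x,\xi)=|x|$.

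I do not anticipate a substantive obstacle, since the two identities from the preceding theorem do all of the real work. The only minor bookkeeping point worth flagging is that the symbol $=$ in \eqref{eq:inequality_between_norm_of_derivative_of_Poisson_kernel_and_Poisson_kernels_times_horsp.indices} should be read as $\leqslant$, and that $A(\la)$ must be understood in a matrix norm (Frobenius is the most natural choice here) that makes the Cauchy--Schwarz estimates clean without stray dimensional constants.
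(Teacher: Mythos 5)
Your proof is correct and follows exactly the route the paper intends: the paper's own proof simply declares part $(i)$ an ``immediate consequence'' of \eqref{eq:sigma-bar_in_terms_of_sigmas} and part $(ii)$ a similar consequence of \eqref{eq:derivatives_of_Poisson_kernels_in_terms_of_Poisson_kernels_times_horsph.indices}, \eqref{eq:max_of_horospherical_index_on_the_boundary} and \eqref{eq:generalized_homogeneous_isotropic_Martin_kernel}, and your triangle-inequality-plus-Cauchy--Schwarz computation is the obvious way to fill in those details. Your two bookkeeping remarks --- that the ``$=$'' in \eqref{eq:inequality_between_norm_of_derivative_of_Poisson_kernel_and_Poisson_kernels_times_horsp.indices} must be read as ``$\leqslant$'' (as the paper itself does when it invokes it in \eqref{eq:pointwise_bound}) and that $A(\lambda)$ should be a Frobenius-type matrix norm --- are both apt.
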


\begin{proof}
Part $(i)$ is an immediate consequence of \eqref{eq:sigma-bar_in_terms_of_sigmas}.
Part $(ii)$ follows similarly from \eqref{eq:derivatives_of_Poisson_kernels_in_terms_of_Poisson_kernels_times_horsph.indices}
thanks to
\eqref{eq:max_of_horospherical_index_on_the_boundary},
\eqref{eq:generalized_homogeneous_isotropic_Martin_kernel} 
and Cauchy--Schwarz inequality, because, if $\boldsymbol {e_1}=(1,0,\dotsc,0)$, then
$\| (a_{1,r},\dotsc,a_{r,r}\|=\|A_{n-1}(\lambda) \boldsymbol {e_1}\| \leqslant \mathcal A(\lambda)\|$. 
Note that \eqref{eq:inequality_between_norm_of_derivative_of_Poisson_kernel_and_Poisson_kernels_times_horsp.indices} makes sense since $z$ is a function of $\lambda$; indeed, by
\eqref{eq:the_eigenvalue_map},
$\lambda=\gamma(z)=(q^z+q^{1-z})/(q+1)$, 
and, by \eqref{eq:Re(z)>=1/2},
$\gamma\colon J\to\mathC$ is bijective.
\end{proof}

\section{Continuity and surjectivity}\label {Sec:Surjectivity and hypercyclicity}

For each $\lambda>\rho$, denote by $\calH_\lambda$ the vector space of all 
$\lambda$-polyharmonic functions. We have seen in \eqref{eq:Poisson_representation_of_lambda-polyharmonic} that every  $f\in\calH_\lambda$ of order $m$ has a unique decomposition
$f(x) = \sum_{j=0}^{m-1} \int_{\partial T} \Poiss^{(j)}(x,\xi|\,\la)\,d\sigma_j(\xi)$. Let us introduce generalized Poisson transforms from distributions on $\partial T$ to functions on $T$ by setting, for $j\geqslant 0$,
\[
\Poisstr^{(j)} (\sigma)(x) =  \int_{\partial T} \Poiss^{(j)}(x,\xi|\,\la)\,d\sigma(\xi).
\]
In particular, $\Poisstr^{(0)}$ is the usual Poisson transformation at the eigenvalue $\lambda$.
\begin{corollary}\label{cor:Delta-gamma_I_surjective_on_span_of_polyharmonics} $\Delta -\gamma\;\mathI = P-\lambda\,\mathI$ is surjective on $\calH_{\lambda}$.
\end{corollary}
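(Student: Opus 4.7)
The plan is to solve the equation $(P - \lambda\,\mathI)h = g$ explicitly, using the key identity \eqref{eq:action_of_P-lambda_on_lambda-polyharmonics}, which says that $P - \lambda\,\mathI$ acts on the derivative $\Poiss^{(r)}(\cdot\,,\xi|\,\la)$ of the Poisson kernel as a lowering operator, sending it to $(-1)^r r\,\Poiss^{(r-1)}(\cdot\,,\xi|\,\la)$. Passing to the associated Poisson transforms, this reads
\[
(P-\lambda\,\mathI)\,\Poisstr^{(r)}(\sigma) = (-1)^r r\,\Poisstr^{(r-1)}(\sigma)
\]
for every distribution $\sigma$ on $\partial T$ and every $r\geqslant 1$, while the $r=0$ term is annihilated because $\Poiss(\cdot\,,\xi|\,\la)$ is itself a $\lambda$-eigenfunction.

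Given a target $g \in \calH_\lambda$ of order $m$, I would invoke the unique representation \eqref{eq:Poisson_representation_of_lambda-polyharmonic} to write $g = \sum_{r=0}^{m-1} \Poisstr^{(r)}(\tau_r)$ with $\tau_{m-1}\not\equiv 0$. I would then look for a preimage of the form $h = \sum_{r=0}^{m} \Poisstr^{(r)}(\sigma_r)$. Applying $P-\lambda\,\mathI$ term by term yields
\[
(P-\lambda\,\mathI)h \;=\; \sum_{r=1}^{m} (-1)^r r\, \Poisstr^{(r-1)}(\sigma_r).
\]

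Matching coefficients via the uniqueness clause of \eqref{eq:Poisson_representation_of_lambda-polyharmonic} reduces the problem to the triangular system $(-1)^r r\,\sigma_r = \tau_{r-1}$ for $r = 1,\dots,m$, solved explicitly by $\sigma_r = \frac{(-1)^r}{r}\,\tau_{r-1}$, with $\sigma_0$ free (take $\sigma_0 = 0$ for definiteness). The resulting $h$ is $\lambda$-polyharmonic of order exactly $m+1$, since $\sigma_m = \frac{(-1)^m}{m}\,\tau_{m-1}\not\equiv 0$; hence $h\in\calH_\lambda$ and $(P-\lambda\,\mathI)h = g$ by construction.

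There is essentially no obstacle: the argument is a direct application of the lowering identity together with the uniqueness of the Poisson-type decomposition. The only point to be careful about is that $\sigma_0$ remains a free parameter, so preimages are determined only up to a $\lambda$-eigenfunction; this is consistent with surjectivity (and reflects that $P-\lambda\,\mathI$ is not injective on $\calH_\lambda$, whose kernel is precisely the eigenspace at $\lambda$).
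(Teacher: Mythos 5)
Your proposal is correct and is essentially the paper's own argument: the paper likewise uses the lowering identity \eqref{eq:action_of_P-lambda_on_lambda-polyharmonics} to exhibit the explicit preimage $\sum_{r}(-1)^{r+1}\tfrac{1}{r+1}\Poisstr^{(r+1)}(\tau_r)$, which after reindexing is exactly your $\sigma_r=\tfrac{(-1)^r}{r}\tau_{r-1}$ with $\sigma_0=0$. The only cosmetic difference is that you derive the coefficients by matching via the uniqueness of the representation, whereas the paper simply writes down the right inverse and lets the identity verify it directly.
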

\begin{proof}
By \eqref{eq:action_of_P-lambda_on_lambda-polyharmonics},
 the generic element in $\calH_\lambda$ is of the type 
\begin{equation}\label{eq:the_shape_of_the_right_inverse}
\sum_{r=0}^{n} \Poisstr^{(r)} (\sigma_{r})= (P-\lambda\,\mathI) \, \sum_{r=0}^{n} (-1)^{r+1} \;\frac 1{r+1}\; \Poisstr^{(r+1)} (\sigma_r) .
\end{equation}

\end{proof}

Now let us define a norm on $\lambda$-polyharmonic functions as follows:
\begin{definition}\label{def:the_p-norm_of_lambda-polyharmonic_functions}
If $f$ is $\lambda$-polyharmonic, that is $f(x) = \sum_{j=0}^{m} \Poisstr^{(j)} (\sigma_j)(x)$,  we let
\begin{equation}\label {eq:the_norm_of_lambda-polyharmonic_functions}
\vertiii f= \sum_{j=0}^{m} {j!}\;\|\sigma_j\|\,.
\end{equation}
\end{definition}

Recall that $\lambda=\gamma(z)$. By \eqref {eq:inequality_between_norm_of_derivative_of_Poisson_kernel_and_Poisson_kernels_times_horsp.indices}, if $f\in\calH_\lambda$ has order $m$,
\begin{equation}\label{eq:pointwise_bound}
|f(x)|\leqslant \sum_{j=0}^{m} \max_{\xi\in\partial T} \bigl|\Poiss^{(j)} (x,\, \xi\, |\,\lambda)\bigr|\; \|\sigma_j\| \leqslant
 C_m(x,\lambda) \,  \sum_{j=0}^{m}  \|\sigma_j\|
\end{equation}
with $C_m(x,\lambda)= \mathcal{A}(\lambda)\,q^{|x|\Real z}\,\sqrt{m}\, |x|^m$. Note that, for every $\alpha\geqslant 0$, $\sqrt{m}\,\alpha^m/m!$ is a bounded sequence; therefore
$C_m(x,\lambda)\leqslant C(x,\lambda) \,m!$ for some constant $C(x,\lambda)$ (here, as already observed, $z$ is a function of $\lambda$).
Hence:

\begin{proposition}\label {prop:norm_convergence=>pointwise_convergence}
Every Cauchy sequence in the norm \eqref {eq:the_norm_of_lambda-polyharmonic_functions} has a pointwise limit.
\end{proposition}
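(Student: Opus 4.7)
The plan is to show that every evaluation functional $f\mapsto f(x)$, $x\in T$, is bounded by the norm $\vertiii{\cdot}$: for each $x$ there exists $C(x,\la)$, independent of $f$, such that $|f(x)| \leqslant C(x,\la)\,\vertiii{f}$. Once this is in hand, every $\vertiii{\cdot}$-Cauchy sequence is automatically pointwise Cauchy at each vertex, hence pointwise convergent.

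To establish the bound, I would refine \eqref{eq:pointwise_bound} by applying \eqref{eq:inequality_between_norm_of_derivative_of_Poisson_kernel_and_Poisson_kernels_times_horsp.indices} term by term, rather than lifting a uniform constant $C_m(x,\la)$ out of the sum. For each summand in the canonical decomposition $f(x) = \sum_{j=0}^m \Poisstr^{(j)}(\sigma_j)(x)$, the triangle inequality together with \eqref{eq:inequality_between_norm_of_derivative_of_Poisson_kernel_and_Poisson_kernels_times_horsp.indices} yields
\[
\bigl|\Poisstr^{(j)}(\sigma_j)(x)\bigr|\leqslant A(\la)\,q^{|x|\Real z}\,\sqrt{j}\,|x|^j\,\|\sigma_j\|.
\]
The boundedness of the sequence $\sqrt{j}\,|x|^j/j!$ (the same observation already invoked right after \eqref{eq:pointwise_bound}) allows me to absorb $\sqrt{j}\,|x|^j$ into $C(x,\la)\,j!$, so term by term $|\Poisstr^{(j)}(\sigma_j)(x)| \leqslant C(x,\la)\,j!\,\|\sigma_j\|$. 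Summing over $j$ produces exactly $|f(x)| \leqslant C(x,\la)\,\sum_{j=0}^m j!\,\|\sigma_j\| = C(x,\la)\,\vertiii{f}$.

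With the evaluation bound secured, I would conclude as follows. Given a $\vertiii{\cdot}$-Cauchy sequence $\{f_n\}$, write each $f_n$ in its unique distributional form $f_n = \sum_j \Poisstr^{(j)}(\sigma_j^n)$, padding the shorter sums with zero measures to align orders between any given pair (the uniqueness guaranteed by the cited results from \cite{PW-PotAn} makes this unambiguous and leaves the norm unchanged). The difference $f_n - f_k = \sum_j \Poisstr^{(j)}(\sigma_j^n - \sigma_j^k)$ is again $\la$-polyharmonic with $\vertiii{f_n-f_k} = \sum_j j!\,\|\sigma_j^n - \sigma_j^k\|$, so the evaluation bound delivers $|f_n(x)-f_k(x)| \leqslant C(x,\la)\,\vertiii{f_n-f_k} \to 0$ for every $x\in T$, and the pointwise limit $f(x) := \lim_n f_n(x)$ exists. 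The main obstacle is really the subtle mismatch between the way the pointwise bound \eqref{eq:pointwise_bound} is written (a uniform $C_m$ outside the sum, multiplying $\sum_j \|\sigma_j\|$) and the weighted shape of the norm $\vertiii{\cdot}$: the former does not dominate the latter when the order $m$ is allowed to grow, so one must revert to the term-by-term estimate to recover the $j!$-weights.
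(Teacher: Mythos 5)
Your proof is correct, and it is organized differently from the paper's. You prove that every point evaluation $f\mapsto f(x)$ is a bounded functional for the norm $\vertiii{\cdot}$, via the term-by-term estimate $|\Poisstr^{(j)}(\sigma_j)(x)|\leqslant C(x,\la)\,j!\,\|\sigma_j\|$ coming from \eqref{eq:inequality_between_norm_of_derivative_of_Poisson_kernel_and_Poisson_kernels_times_horsp.indices} and the boundedness of $\sqrt{j}\,|x|^j/j!$; then a $\vertiii{\cdot}$-Cauchy sequence is pointwise Cauchy and the completeness of the scalars finishes the argument. Your observation that \eqref{eq:pointwise_bound} as displayed (with the single constant $C_m(x,\la)$ multiplying the unweighted sum $\sum_j\|\sigma_j\|$) does not by itself dominate $\vertiii{f}$, and that one must keep the $j!$-weights inside the sum, is accurate; the paper in effect does the same term-by-term estimate when it bounds the tail by $\sum_{k>N}C_k(x,\la)\|\sigma_k\|\leqslant C(x,\la)\sum_{k>N}k!\|\sigma_k\|$. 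The paper's proof takes a longer route: it first shows that each coordinate sequence $n\mapsto\sigma_k^{(n)}$ is Cauchy in measure norm (since $\|\sigma_k^{(n)}-\sigma_k^{(j)}\|\leqslant\vertiii{f_n-f_j}/k!$), extracts limit measures $\sigma_k$, controls $\sum_k k!\,\|\sigma_k\|$ by Fatou's lemma, and exhibits the pointwise limit explicitly as the convergent series $f(x)=\sum_k\int_{\partial T}\Poiss^{(k)}(x,\xi|\la)\,d\sigma_k(\xi)$. What that extra work buys is precisely the boundary representation of elements of $Y_\la$ with $\sum_j j!\,\|\sigma_j\|<\infty$, which is what Remark \ref{rem:surjectivity_of_P-lambda_on_a_dense_subspace} extracts from ``the proof of Proposition \ref{prop:norm_convergence=>pointwise_convergence}'' and which is used downstream. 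Your argument proves the proposition as stated more economically, but if you stopped there you would need to supply the limit-measure construction separately to support that later remark.
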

\begin{proof}
Let $\{f_{n}\}$ be a Cauchy sequence in norm. As in \eqref{eq:Poisson_representation_of_lambda-polyharmonic}, let us write
$f_n(x) = \sum_{k=0}^{m_n} \int_{\partial T} \Poiss^{(k)}(x,\xi|\,\la)\,d\sigma^{(n)}_k(\xi)$. 
For simplicity, let us set $\sigma^{(n)}_k=0$ for $k>m_n$, whence $f_n(x) = \sum_{k=0}^{\infty} \int_{\partial T} \Poiss^{(k)}(x,\xi|\,\la)\,d\sigma^{(n)}_k(\xi)$.
Then
\begin{equation}\label{eq:norms_of_differences}
\vertiii{ f_n -f_j} = \sum_{k\geqslant 0} {k!}\;\|\sigma^{(n)}_k- \sigma^{(j)}_k\|. 
\end{equation}
Therefore, 
 for each $k$ we have $\|\sigma^{(n)}_k -  \sigma^{(j)}_k\| \leqslant \dfrac1{k!}\;\vertiii{ f_n-f_ j} $. Hence, for each $k$, the sequence $n\mapsto \sigma^{(n)}_k$ is a Cauchy sequence in the space of measures and so it converges to a limit measure $\sigma_k$. 
Note that $\{\vertiii{ f_n -f_j}\colon n,j\in\mathN\}$ is bounded because $\{f_n\}$ is a Cauchy sequence, hence, by \eqref{eq:norms_of_differences} and Fatou's Lemma, 
$ \sum_{k\geqslant 0} {k!}\;\|\sigma_k- \sigma^{(j)}_k\|$ is bounded.  Since
 $\sigma^{(j)}_k$ vanishes for $k>m_j$, it follows that $ \sum_{k\geqslant 0} {k!}\;\|\sigma_k\|$ is bounded.
 For every $x\in T$ and $n,k\in\mathN$, by \eqref{eq:pointwise_bound},
  $| f_n(x)-f_k(x)|\leqslant C_N(x,\lambda) \,  \sum_{j\geqslant 0}  \|\sigma^{(n)}_j-\sigma^{(k)}_j\|$. 
  Since $n\mapsto \sigma^{(n)}_k$ is a Cauchy sequence of measures on $\partial T$, by \eqref {eq:inequality_between_norm_of_derivative_of_Poisson_kernel_and_Poisson_kernels_times_horsp.indices}, $n \mapsto \Poiss^{(k)}(x,\xi |\,\la)\,d\sigma^{(n)}_k(\xi)$ is also a Cauchy sequence for every $x, \xi$ and $\lambda$,
    hence it converges  as $n\to\infty$. Then  $n\mapsto f_n(x)$ converges for every $x$ and $ \lambda$ to the limit  $f(x)=\sum_{k\geqslant 0} \int_{\partial T} \Poiss^{(k)}(x,\xi|\,\la)\,d\sigma_k(\xi)$. Indeed, this series converges for every $x$ and $\lambda$ because, by \eqref{eq:pointwise_bound} and the remarks following this inequality,  for every $N>0$  its $N$-tail satisfies
 \[
 |f(x)-f_n(x)| \leqslant   \sum_{k>N} C_k(x,\lambda) \, \|\sigma_k\| \leqslant  C(x,\lambda) \sum_{k>N} k! \, \|\sigma_k\|
 \]
 and we have seen that the series $ \sum k! \, \|\sigma_k\|$ converges. 
\end{proof}

\begin{definition}\label{def:the_right_space}
For $\lambda\notin \spec(P)$ we denote by $ Y_\lambda $ the closure of $\calH_\lambda$ in the norm of Definition \ref{def:the_p-norm_of_lambda-polyharmonic_functions}. By Proposition \ref{prop:norm_convergence=>pointwise_convergence}, $Y_\lambda$ is a space of functions on $T$.
\end{definition}

\begin{remark}\label {rem:surjectivity_of_P-lambda_on_a_dense_subspace}
The proof of Proposition \ref {prop:norm_convergence=>pointwise_convergence} shows that, 
if $f\in Y_\lambda$, then $f(x) = \sum_{j=0}^{\infty} \Poisstr^{(j)} (\sigma_j)(x)$
with  $\sum_{j=0}^{\infty} j!\,\|\sigma_j\| <\infty$, hence 
$\|\sigma_j\|=o(1/j!)$, and in particular the sequence $\{\|\sigma_j\|\}$ is bounded.
Terefore $ Y_\lambda $ is spanned by $\lambda$-polyharmonic functions
 whose boundary measures $\sigma_j$ in the Poisson representation \eqref{eq:Poisson_representation_of_lambda-polyharmonic} have norms  bounded by 1.

\end{remark}
\begin{proposition}\label{prop:continuity}
For every $\lambda$, $P-\lambda\mathI$ is continuous in the norm of $Y_\lambda$, and $\| P-\lambda\,\mathI\|\leqslant 1$.
\end{proposition}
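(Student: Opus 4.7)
The plan is to verify the inequality $\vertiii{(P-\lambda\mathI)f} \leqslant \vertiii{f}$ first on the dense subspace $\calH_\lambda$, and then extend continuously to all of $Y_\lambda$ using Definition \ref{def:the_right_space}. Since the norm in Definition \ref{def:the_p-norm_of_lambda-polyharmonic_functions} is defined through the \emph{unique} Poisson decomposition provided by \eqref{eq:Poisson_representation_of_lambda-polyharmonic}, the main task is to read off what that decomposition looks like after applying $P-\lambda\mathI$ and compare coefficient norms term by term.

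Concretely, I would start with $f\in\calH_\lambda$ of order $m$, written in its canonical form
\[
f(x) = \sum_{r=0}^{m}\Poisstr^{(r)}(\sigma_r)(x),
\]
and apply the operator termwise using \eqref{eq:action_of_P-lambda_on_lambda-polyharmonics}, which gives $(P-\lambda\mathI)\Poisstr^{(r)}(\sigma_r) = (-1)^r r\,\Poisstr^{(r-1)}(\sigma_r)$. Summing, the $r=0$ term vanishes and I get
\[
(P-\lambda\mathI)f = \sum_{r=1}^{m} (-1)^r r\,\Poisstr^{(r-1)}(\sigma_r),
\]
which I claim is the unique Poisson decomposition of the left-hand side, of order $m-1$ (or less, if $\sigma_m$ happens to produce cancellation). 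Uniqueness is guaranteed by statement (i) preceding the corollary. Applying Definition \ref{def:the_p-norm_of_lambda-polyharmonic_functions} and re-indexing gives
\[
\vertiii{(P-\lambda\mathI)f} \;=\; \sum_{r=1}^{m} (r-1)!\,\bigl\|(-1)^r r\,\sigma_r\bigr\| \;=\; \sum_{r=1}^{m} r!\,\|\sigma_r\| \;\leqslant\; \sum_{r=0}^{m} r!\,\|\sigma_r\| \;=\; \vertiii{f},
\]
the difference being exactly the $0$-th term $\|\sigma_0\|$ which is dropped.

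To extend to $Y_\lambda$, I would invoke Remark \ref{rem:surjectivity_of_P-lambda_on_a_dense_subspace}: every $f\in Y_\lambda$ has a representation $f=\sum_{j\geqslant 0}\Poisstr^{(j)}(\sigma_j)$ with $\sum j!\,\|\sigma_j\|<\infty$. Since $P$ is a local operator (depending only on the values at $x$ and its neighbors), and the series defining $f$ converges pointwise by Proposition \ref{prop:norm_convergence=>pointwise_convergence}, I can pass $P-\lambda\mathI$ inside the sum pointwise at each vertex. The same computation as above then yields $(P-\lambda\mathI)f = \sum_{r\geqslant 1}(-1)^r r\,\Poisstr^{(r-1)}(\sigma_r)$, whose coefficients again satisfy $\sum (r-1)!\cdot r\|\sigma_r\|=\sum r!\,\|\sigma_r\|<\infty$, so the image lies in $Y_\lambda$ and the norm bound persists.

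The only mild obstacle I foresee is the uniqueness claim for the Poisson decomposition of $(P-\lambda\mathI)f$; one has to check that no hidden cancellation makes the computed coefficients fail to be the canonical ones. This is immediate from part (i) of the result following \eqref{eq:Poisson_representation_of_lambda-polyharmonic}, since the function $(P-\lambda\mathI)f$ is $\lambda$-polyharmonic and its decomposition is therefore unique. Everything else is a straightforward re-indexing.
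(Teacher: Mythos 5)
Your proof is correct and follows essentially the same route as the paper's: apply \eqref{eq:action_of_P-lambda_on_lambda-polyharmonics} termwise to the canonical decomposition, re-index, and observe that the norm drops by exactly the $\|\sigma_0\|$ term, then extend by density from $\calH_\lambda$ to $Y_\lambda$. Your extra remarks on uniqueness of the decomposition and on passing the operator inside the infinite sum are sound elaborations of what the paper leaves implicit.
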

\begin{proof}
Since $\calH_\lambda$ is dense in $Y_\lambda$ it is enough to consider $f\in\calH_\lambda$, that is,  $f(x) = \sum_{j=0}^{m} \Poisstr^{(j)} (\sigma_j)$. Then by \eqref {eq:action_of_P-lambda_on_lambda-polyharmonics}
\[
(P-\lambda\,\mathI) f = \sum_{j=1}^{m}  (-1)^j j \,\Poisstr^{(j-1)} (\sigma_j) = \sum_{j=0}^{m-1}  (-1)^{j+1} (j+1) \,\Poisstr^{(j)} (\sigma_{j+1}),
\]
 hence
\[
\vertiii{(P-\lambda\,\mathI)f } = \sum_{j=0}^{m-1} (j+1)\;{j!}\;\|\sigma_{j+1}\| =
 \sum_{j=1}^{m} \; j!\,\|\sigma_{j}\| \leqslant \vertiii f .
\] 
\end{proof}

It would be nice to find a Banach space where $P-\lambda\,\mathI$ is an open operator, more specifically such that $P-\lambda\,\mathI$ is bounded below, i.e., $\| (P-\lambda\,\mathI) f \| \geqslant \| f \|$. This, however, is not possible, because $ (P-\lambda\,\mathI) $ has a non-trivial kernel; indeed, $x\mapsto \Poiss(x,\xi|\,\la)$ is a $\lambda$-eigenfunction of $P$ that belongs to $Y_\lambda$. Therefore we shall look at the quotient $Y_\lambda/\ker(P-\lambda\,\mathI)$ and show that here  $P-\lambda\,\mathI$ is isometric. More precisely, for each $h$ we shall consider a function $f$ such that $(P-\lambda\,\mathI) f =h$ and $\|f\| = \|h\|$, defined as follows.

By Corollary \ref{cor:Delta-gamma_I_surjective_on_span_of_polyharmonics}, $h(x)=\sum_{j=0}^{n}  \Poisstr^{(j)} (\sigma^{(h)}_j) $ for every $\lambda$-polyharmonic function $h$, and the set of functions $f$ such that 
$(P-\lambda\,\mathI)f = h$ form the hyperplane
\[
\Bigl\{ f = \sum_{j=0}^{n} (-1)^{j+1} \;\frac1{j+1} \;\Poisstr^{(j+1)}(\sigma^{(h)}_j) \mod \ker (P-\lambda\,\mathI)\Bigr\},
\]
i.e., they are unique modulo the kernel of $P-\lambda\,\mathI$; we shall choose $f$ such that this additional term is null. That is, for each $\lambda$-polyharmonic $h$ we choose $f_h= \sum_{j=0}^{n}  (-1)^{j+1} \;\frac1{j+1} \;\Poisstr^{(j+1)}\,(\sigma^{(h)}_j)$.
Then $(P-\lambda\,\mathI)f_h=h$, and by \eqref{eq:the_norm_of_lambda-polyharmonic_functions}
\begin{equation}\label {eq:P-lambda_is_open_on_a_suitable_subspace}
\vertiii{ f _h} = \sum_{j=0}^{n} \;j!\;\|\sigma^{(h)}_j\| = \vertiii h. 
\end{equation}

\begin{lemma}\label{lemma:surjectivity}
$P-\lambda\,\mathI$ is surjective on $Y_\lambda $ for $\lambda\notin\spec(P)$.
\end{lemma}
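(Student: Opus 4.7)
The idea is to extend by continuity the explicit right inverse $h \mapsto f_h$ that was constructed on $\calH_\lambda$ just before the statement. Recall that for each $h \in \calH_\lambda$ with the unique decomposition $h = \sum_{j=0}^{n} \Poisstr^{(j)}(\sigma_j^{(h)})$ we set $f_h = \sum_{j=0}^{n} (-1)^{j+1}\,\frac{1}{j+1}\,\Poisstr^{(j+1)}(\sigma_j^{(h)})$, and \eqref{eq:P-lambda_is_open_on_a_suitable_subspace} gives $\vertiii{f_h} = \vertiii{h}$. Since $h \mapsto f_h$ is linear and isometric on the dense subspace $\calH_\lambda \subset Y_\lambda$, it extends uniquely to a bounded linear map $S\colon Y_\lambda \to Y_\lambda$ with $\vertiii{Sh} = \vertiii{h}$.

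More concretely, by Remark \ref{rem:surjectivity_of_P-lambda_on_a_dense_subspace}, every $h\in Y_\lambda$ admits a representation $h(x)=\sum_{j=0}^{\infty}\Poisstr^{(j)}(\sigma_j)(x)$ with $\sum_{j=0}^{\infty}j!\,\|\sigma_j\|<\infty$. The plan is to define
\[
f_h(x) = \sum_{j=0}^{\infty} (-1)^{j+1}\,\frac{1}{j+1}\,\Poisstr^{(j+1)}(\sigma_j)(x),
\]
and to verify that this series converges in the norm $\vertiii{\cdot}$: setting $\tau_{j+1} = \frac{(-1)^{j+1}}{j+1}\,\sigma_j$, one has $(j+1)!\,\|\tau_{j+1}\| = j!\,\|\sigma_j\|$, so the partial sums of $f_h$ are Cauchy in $Y_\lambda$ and their limit satisfies $\vertiii{f_h}=\sum_{j\geqslant 0} j!\,\|\sigma_j\| = \vertiii{h}$. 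In particular $f_h \in Y_\lambda$.

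It then remains to check that $(P-\lambda\mathI)f_h = h$. Since $P-\lambda\mathI$ is continuous on $Y_\lambda$ by Proposition \ref{prop:continuity}, it commutes with the norm-convergent series defining $f_h$, and \eqref{eq:action_of_P-lambda_on_lambda-polyharmonics} applied term by term gives
\[
(P-\lambda\mathI)f_h = \sum_{j=0}^{\infty} (-1)^{j+1}\,\frac{1}{j+1}\,(-1)^{j+1}(j+1)\,\Poisstr^{(j)}(\sigma_j) = \sum_{j=0}^{\infty}\Poisstr^{(j)}(\sigma_j) = h,
\]
establishing surjectivity.

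The only subtle point, which I would want to nail down carefully, is the uniqueness of the decomposition $h=\sum_j \Poisstr^{(j)}(\sigma_j)$ for $h\in Y_\lambda$ (so that $f_h$ is unambiguously defined), but this follows from the uniqueness clause of \eqref{eq:Poisson_representation_of_lambda-polyharmonic} in the finite-order case combined with the stability of the measures under Cauchy sequences, as already exploited in the proof of Proposition \ref{prop:norm_convergence=>pointwise_convergence}. Everything else is a routine exchange of limits that is justified by the absolute convergence $\sum_{j\geqslant 0} j!\,\|\sigma_j\|<\infty$.
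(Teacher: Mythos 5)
Your proof is correct and follows essentially the same route as the paper: extend the isometric right inverse $h\mapsto f_h$ of \eqref{eq:P-lambda_is_open_on_a_suitable_subspace} from the dense subspace $\calH_\lambda$ to all of $Y_\lambda$ by completeness. The only difference is in the final verification that $(P-\lambda\,\mathI)f_h=h$: you invoke the norm-continuity of $P-\lambda\,\mathI$ from Proposition \ref{prop:continuity} to pass to the limit term by term, whereas the paper uses pointwise convergence (Proposition \ref{prop:norm_convergence=>pointwise_convergence}) together with the fact that $P-\lambda\,\mathI$ has finite range; your route is, if anything, slightly more direct, and your worry about uniqueness of the representation of $h$ is harmless since surjectivity only requires some preimage.
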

\begin{proof}
We have shown in Remark \ref{rem:surjectivity_of_P-lambda_on_a_dense_subspace} that $P-\lambda\,\mathI$ is surjective on $\lambda$-polyharmonic functions. Now take any element $h\in Y_\lambda $. Then $h$ is  the limit of a  sequence of $\lambda$-polyharmonic functions $h_n$ in the norm of $Y_\lambda $. By \eqref{eq:P-lambda_is_open_on_a_suitable_subspace}, $\{f_{h_n}\}$ is a Cauchy sequence in this norm, hence it converges in norm to a function $f\in Y_\lambda $. Now $h_n = (P-\lambda\,\mathI) f_{h_n}$, and $h_n \to h$ and $f_{h_n} \to f$ in norm. 
By Proposition \ref{prop:norm_convergence=>pointwise_convergence}, $f_{h_n}(x) \to f(x)$ for every $x\in T$.
Since
$P-\lambda\,\mathI$ is an operator of range 1 (that is, $(P-\lambda\,\mathI) f (x)$ depends only on the values of $f$ at the vertex $x$ and its neighbors), the fact that $f_{h_n}\to f$ pointwise implies that $h_n(x)=(P-\lambda\,\mathI) f_{h_n} (x) \to (P-\lambda\,\mathI) f (x)$ for every $x$, so $(P-\lambda\,\mathI) f (x)$ is the pointwise limit of the sequence $\{h_n\}$. Since $h_n$ converges to $h$ in norm, this pointwise limit is $h$, again by Proposition \ref{prop:norm_convergence=>pointwise_convergence}. 
\end{proof}

\section{Hypercyclicity}

\begin{definition}[Hypercyclic operators] \label{def:hypercyclic}
A vector $f$ is hypercyclic with respect to an operator $\T$ on some Banach space $Y$ if the set $\{\T^nf\}$ is dense.   $\T$ is \emph{hypercyclic} if it has a hypercyclic vector.  
\end{definition}

\begin{proposition}
For $\lambda\notin\spec(P)$. the operator $P-\lambda\,\mathI$ is  not hypercyclic on $Y_\lambda$.
\end{proposition}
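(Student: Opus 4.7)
The plan is to exploit the contraction estimate $\|P-\lambda\mathI\|\leqslant 1$ on $Y_\lambda$ established in Proposition \ref{prop:continuity}. The underlying principle is entirely standard: a power-bounded operator on a non-trivial normed space cannot be hypercyclic, because every orbit is bounded in norm while the space itself contains vectors of arbitrarily large norm.

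Concretely, I would fix an arbitrary $f\in Y_\lambda$ and iterate the bound of Proposition \ref{prop:continuity}: by submultiplicativity of the operator norm,
\[
\vertiii{(P-\lambda\mathI)^n f}\leqslant \|P-\lambda\mathI\|^n\,\vertiii f\leqslant \vertiii f
\]
for every $n\geqslant 0$. Hence the orbit $\{(P-\lambda\mathI)^n f\colon n\in\mathN\}$ is confined to the closed ball of radius $\vertiii f$ around $0$ in $Y_\lambda$. On the other hand, the Poisson kernel $x\mapsto\Poiss(x,\xi|\,\la)$ belongs to $\calH_\lambda\subset Y_\lambda$ and is non-zero, so its scalar multiples realize every positive real value of $\vertiii{\cdot}$. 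In particular $Y_\lambda$ contains vectors of norm strictly greater than $\vertiii f$, so the orbit of $f$ cannot be dense. Since $f$ was arbitrary, no vector of $Y_\lambda$ is hypercyclic.

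I do not foresee any real obstacle: the substantive work has already been done in Proposition \ref{prop:continuity}, where the norm $\vertiii{\cdot}$ was tailored precisely so that $P-\lambda\mathI$ acts as a contraction. What remains here is the elementary bounded-orbit observation, together with the (immediate) remark that $Y_\lambda$ is non-trivial and hence $\vertiii{\cdot}$-unbounded as a set.
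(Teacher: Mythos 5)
Your proof is correct, but it takes a more elementary route than the paper's. You rule out hypercyclicity purely from power-boundedness: Proposition \ref{prop:continuity} makes $P-\lambda\,\mathI$ a contraction, so every orbit stays in the closed ball of radius $\vertiii f$, and since $Y_\lambda$ contains a nonzero vector (the Poisson kernel $x\mapsto\Poiss(x,\xi|\,\la)$, of norm $1$) whose scalar multiples realize arbitrarily large values of $\vertiii{\cdot}$, no bounded orbit can be dense. The paper instead proves the strictly stronger statement that $(P-\lambda\,\mathI)^n f\to 0$ for \emph{every} $f\in Y_\lambda$: it approximates $f$ within $\epsilon$ by a $\lambda$-polyharmonic function $f_j$ of finite order $m_j$, observes that $(P-\lambda\,\mathI)^n f_j=0$ once $n\geqslant m_j$ (local nilpotence on the dense subspace $\calH_\lambda$), and uses the same contraction estimate to control the error term uniformly in $n$. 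What your argument buys is generality --- it applies verbatim to any power-bounded operator on a nontrivial normed space, with no appeal to the density of $\calH_\lambda$ --- while the paper's argument buys a sharper description of the dynamics (all orbits converge to zero, not merely remain bounded). Both hinge on the norm estimate $\| P-\lambda\,\mathI\|\leqslant 1$ of Proposition \ref{prop:continuity}, and both are complete.
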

\begin{proof}
For each $f\in Y_\lambda$ choose a sequence $f_j\in\calH_\lambda$ such that $f_j$ converges to $f$ in norm. Let $m_j$ be the order of the $\lambda$-polyharmonic function $f_j$. For an arbitrary $\epsilon>0$ choose $j$ such that $\vertiii{f-f_j}<\epsilon$. Then, by Proposition \ref{prop:continuity}, for every $n\in\mathN$, $\vertiii{(P-\lambda\,\mathI)^n (f-f_j)} <  \epsilon$, hence $\vertiii{(P-\lambda\,\mathI)^n f} <
\vertiii{(P-\lambda\,\mathI)^n f_j} + \epsilon$. Since $(P-\lambda\,\mathI)^n f_j = 0$ for $n\geqslant m_j$, it follows that $(P-\lambda\,\mathI)^n f$ tends to zero in $Y_\lambda$, and so $P-\lambda\,\mathI$ is not hypercyclic.
\end{proof}

However:

\begin{theorem}\label{theo:hypercyclicity}
Let $\lambda\notin\spec(P)$.  Then $\T=e^{t(P-\lambda \mathI)} = e^{t(\Delta-\gamma \, \mathI)}$ is hypercyclic on $Y_\lambda $ for every $t\neq 0$.
\end{theorem}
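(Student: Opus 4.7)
The plan is to invoke the Herzog--Schmoeger theorem \cite{HS}, already flagged in the introduction as the main analytic input. That result says: on a separable Banach space $X$, if a bounded linear operator $T$ is surjective and its generalized kernel $\bigcup_{n\geqslant 0}\ker T^n$ is dense in $X$, then $e^{tT}$ is hypercyclic for every $t\neq 0$. I would apply it with $X = Y_\lambda$ and $T = P-\lambda\mathI$, and then use the identity $P - \lambda\mathI = \Delta - \gamma\mathI$ to identify the two semigroups in the statement.

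Every hypothesis is essentially already in hand. Boundedness, in fact $\vertiii{P-\lambda\mathI}\leqslant 1$, is Proposition \ref{prop:continuity}. Surjectivity on $Y_\lambda$ is precisely Lemma \ref{lemma:surjectivity}, whose proof hinges on the explicit ``right inverse'' coming from shifting the index in the Poisson-type expansion, together with the isometric estimate \eqref{eq:P-lambda_is_open_on_a_suitable_subspace} that lets one pass the construction to the completion. For the generalized-kernel condition, observe that by the very definition of $\lambda$-polyharmonicity,
\[
\bigcup_{n\geqslant 1}\ker(P-\lambda\mathI)^n \;=\; \calH_\lambda,
\]
and $\calH_\lambda$ is dense in $Y_\lambda$ by construction (Definition \ref{def:the_right_space}). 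So the two hypotheses of Herzog--Schmoeger are satisfied, and the conclusion is exactly Theorem \ref{theo:hypercyclicity}.

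The main obstacle I would expect is the separability of $Y_\lambda$, which Herzog--Schmoeger implicitly require and which is not isolated as a lemma earlier in the paper. I would attack it via the series expansion of Remark \ref{rem:surjectivity_of_P-lambda_on_a_dense_subspace}: each $f\in Y_\lambda$ is of the form $\sum_{j\geqslant 0}\Poisstr^{(j)}(\sigma_j)$ with $\sum j!\,\|\sigma_j\|<\infty$, so truncating $j$ and approximating each boundary measure $\sigma_j$ by a rational linear combination of the characteristic functions of the basic sectors $S_u$, $u\in T$, which form a countable basis for the topology of $\partial T$, should produce a countable norm-dense subset in the norm of Definition \ref{def:the_p-norm_of_lambda-polyharmonic_functions}. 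The slight delicacy is that one must do this approximation within the total variation norm on the relevant class of boundary distributions rather than merely weakly, and exploit the fast decay forced by the factorial weights to control the tail of the series uniformly. Once separability is established, the Herzog--Schmoeger criterion applies directly and simultaneously yields hypercyclicity of $e^{t(\Delta-\gamma\mathI)}$ for every nonzero $t$.
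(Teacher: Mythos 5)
Your proposal follows exactly the route the paper takes: apply the Herzog--Schmoeger theorem to $T=P-\lambda\mathI$ on $Y_\lambda$, using boundedness (Proposition \ref{prop:continuity}), surjectivity (Lemma \ref{lemma:surjectivity}), and the density of the generalized kernel $\bigcup_n\ker(P-\lambda\mathI)^n=\calH_\lambda$, which holds by the very definition of $Y_\lambda$. Those three verifications are correct and are precisely the ones made in the paper's proof.

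The one point where you go beyond the paper --- separability of $Y_\lambda$, which Herzog--Schmoeger require and which the paper never addresses --- is exactly the right thing to worry about, but your sketched argument for it cannot work, and the obstruction is structural rather than technical. For each $\xi\in\partial T$ the function $x\mapsto \Poiss(x,\xi|\lambda)=\Poisstr^{(0)}(\delta_\xi)(x)$ is a $\lambda$-eigenfunction, hence lies in $\calH_\lambda$, and since the boundary representation \eqref{eq:Poisson_representation_of_lambda-polyharmonic} is unique, Definition \ref{def:the_p-norm_of_lambda-polyharmonic_functions} gives $\vertiii{\Poisstr^{(0)}(\delta_\xi)-\Poisstr^{(0)}(\delta_\eta)}=\|\delta_\xi-\delta_\eta\|=2$ for $\xi\neq\eta$. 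As $\partial T$ is uncountable (for $q\geqslant 2$), $Y_\lambda$ contains an uncountable $2$-separated family and is therefore \emph{not} separable in the norm $\vertiii{\cdot}$. This is also why your approximation scheme fails: a point mass $\delta_\xi$ is at total-variation distance at least $1$ from every measure $\mu$ with $\mu(\{\xi\})=0$, so rational combinations of sector indicators (which are absolutely continuous with respect to $\nu_o$) cannot approximate it; the factorial weights do not help, since the problem already occurs at $j=0$. Because a hypercyclic orbit is a countable dense set, no operator on a non-separable space can be hypercyclic, so the separability gap you flagged is not merely a missing lemma: as the space $Y_\lambda$ is defined, the conclusion itself fails, and repairing the argument requires modifying the construction (for instance, replacing the total variation norm on the boundary data by a separable norm, or restricting the admissible distributions to a separable class such as the measures absolutely continuous with respect to $\nu_o$) before Herzog--Schmoeger can legitimately be invoked.
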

\begin{proof} 
The $\lambda$-polyharmonic functions of $P$ of order $n$ form the kernel of $(P-\lambda\mathI)^n$: therefore $\calH_\lambda=\bigcup_{n\geqslant 0} \ker \left(P-\lambda\mathI\right)^n$, and this space is dense in $Y_\lambda$. 
The operator $P-\lambda\mathI$ is surjective onto  $Y_\lambda $ by Lemma \ref{lemma:surjectivity}, and is bounded by Remark \ref{rem:surjectivity_of_P-lambda_on_a_dense_subspace}\,$(ii)$.
Then it follows from \cite{HS}*{Theorem 1} that $\psi(\T)$ is hypercyclic if $\psi$ is a non-constant function analytic in a neighborhood of $\s$ with $|\psi(0)|=1$ and $0\notin \psi(\s)$.
Hence the operator $\T=e^{t(\Delta-\gamma \, \mathI)}$ is hypercyclic for $t\neq 0$, since $0$ is not in the image of the exponential map $z\mapsto e^{tz}$.
\end{proof}

\end{document}